\theoremstyle{theorem}
\newtheorem{theorem}{Theorem}
\theoremstyle{definition}
\newtheorem*{remark}{Remark}
\begin{document}

\section{A Short and Elementary Proof of the Two-sidedness of the Matrix-Inverse}
Pietro Paparella (pietrop@uw.edu) University of Washington Bothell  

\bigskip
\noindent 

Beauregard \cite{b2007} and Strang \cite[pp.~88-89]{s2016} give an elementary proof of the two-sidedness of the matrix-inverse using elementary matrices. Sandomierski \cite{s2012} gives another elementary proof using an under-determined homogeneous linear system. Hill \cite{h1967} provides a proof using basic results of rings and ideals, and Baksalary \& Trenkler \cite{bt2013} provide a proof using the Moore-Penrose inverse.  

However, as we shall see, the two-sidedness of the matrix inverse can be shown using only the notions of \emph{linear independence} and the \emph{reduced row-echelon form} of a matrix, concepts that feature prominently in an elementary linear algebra course (it is worth noting that the proof of \hyperref[twoside]{Theorem \ref*{twoside}} is in the same vein as that found in the advanced textbook by Meyer \cite[Example 3.7.2]{m2000}).

In addition, and as an application of this result, we prove that a matrix is invertible if and only if it is row-equivalent to the identity matrix without appealing to elementary matrices, which is common in textbooks (see, e.g., \cite{llm2016,s2016}). This result is shown by utilizing the feasibility of certain linear systems and appealing to a well-known theorem. 

Such demonstrations are valuable pedagogically, given that the linear system is the central object of study in a typical elementary linear algebra course. 

\section{The Proofs}
In what follows all matrices are assumed to be $n$-by-$n$ and all vectors are $n$-by-$1$. As is standard, the identity matrix is denoted by $I$. 


\begin{theorem}
\label{twoside}
If $A$ and $B$ are matrices such that $AB= I$, then $BA = I$.
\end{theorem}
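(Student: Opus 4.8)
The plan is to manufacture a \emph{right} inverse $C$ for $B$ by purely elementary means and then collapse everything with associativity. First I would examine the homogeneous system $Bx = 0$. If $Bx = 0$, then left-multiplying by $A$ and invoking $AB = I$ gives
\[
x = Ix = (AB)x = A(Bx) = A\mathbf{0} = \mathbf{0},
\]
so the only solution is trivial. This is precisely the statement that the columns of $B$ are \emph{linearly independent}.

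Next I would translate this into a statement about the reduced row-echelon form of $B$. Since $B$ is $n$-by-$n$ with linearly independent columns, its reduced row-echelon form has a pivot in each of its $n$ columns; but an $n$-by-$n$ matrix in reduced row-echelon form with $n$ pivots can only be $I$. Consequently every system $Bx = y$ is consistent, because the coefficient matrix reduces to $I$, leaving no zero row that could force an inconsistency. In particular, for each standard basis vector $e_j$ the system $Bx = e_j$ has a solution $c_j$, and assembling these as the columns of $C = [\,c_1\ \cdots\ c_n\,]$ yields $BC = I$; thus $B$ admits a right inverse.

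With $AB = I$ and $BC = I$ in hand, the conclusion is immediate from associativity:
\[
A = AI = A(BC) = (AB)C = IC = C,
\]
so that $BA = BC = I$, as required. I expect the genuine work to lie in the middle step—arguing rigorously, using only linear independence and the reduced row-echelon form, that $B$ has a right inverse—since the final computation is automatic once $C$ exists. The crux is recognizing that linear independence of the columns forces the reduced row-echelon form to be the identity, which in turn guarantees solvability of $Bx = e_j$ for every $j$; everything else is bookkeeping.
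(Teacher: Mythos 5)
Your proposal is correct, and while its first half coincides with the paper's argument (the same contradiction showing the columns of $B$ are linearly independent, followed by the reduced row-echelon form argument giving solvability of $Bx = y$ for every $y$), your finish takes a genuinely different route. You convert solvability into an explicit right inverse for $B$: solving $Bx = e_j$ for each $j$ and assembling the solutions into $C$ gives $BC = I$, after which the classical monoid identity $A = A(BC) = (AB)C = C$ immediately yields $BA = BC = I$. The paper never constructs such a $C$; instead it runs a second linear-independence argument to show that the columns of $A$ are also linearly independent (using solvability of $Bx = y$ to produce a preimage of any alleged kernel vector of $A$), then manipulates $AB = I$ into $A(BA - I) = 0$ and concludes $BA - I = 0$ because a nonzero column of $BA - I$ would be a nontrivial kernel vector of $A$. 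Your version is arguably more economical, since the "left inverse equals right inverse" computation is automatic once $C$ exists, and it anticipates exactly the technique (assembling solutions of the systems $Ax = e_i$ into a matrix $X$ with $AX = I$) that the paper itself deploys in proving its second theorem. The paper's version, in exchange, establishes along the way that the columns of $A$ are linearly independent, which is one of the equivalences it wants for the invertible matrix theorem collected in the closing remark; your route gets $BA = I$ faster but extracts slightly less intermediate information.
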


\begin{proof}
First, we show that the columns of $B$ form a linearly independent set; to the contrary, if $Bx = 0$, where $x \neq 0$, then $x = Ix = ABx= A(Bx) = A(0) = 0$,  
a contradiction.

Because the columns of $B$ form a linearly independent set, it follows that the reduced row-echelon form of $B$ contains a pivot in every column and hence every row; following a well-known theorem \cite[Theorem 4, \S 1.4]{llm2016}, the equation $Bx = b$ has a solution for every vector $b$. 

Next, we argue that the columns of $A$ must form a linearly independent set; to that end, suppose $Ay = 0$, where $y \neq 0$. From previous, there is a necessarily nonzero vector $x$ such that $y = Bx$. Thus, $0 = Ay = A(Bx) = (AB)x = Ix = x$, a contradiction. 

Multiplying both sides of the equation $AB=I$ on the right by $A$ yields $ABA = A$. Subtracting $A$ from both sides and applying the distributive property yields $A(BA- I) = 0$. 

We claim that the matrix $Z:=BA - I = 0$; if not, then $Z$ contains a nonzero entry and hence a nonzero column -- say the $j\textsuperscript{th}$-column, which we denote by $z_j$. Thus, $Az_j = 0$, which is a contradiction unless $Z = BA - I = 0$, i.e., $BA = I$.
\end{proof}

\begin{theorem}
A matrix $A$ is invertible if and only if $A$ is row-equivalent to $I$.
\end{theorem}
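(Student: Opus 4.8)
The plan is to prove both implications by connecting the invertibility of $A$ to the pivot structure of its reduced row-echelon form, using Theorem~\ref{twoside} to upgrade any one-sided inverse into a two-sided one, and using the well-known theorem invoked in the proof of Theorem~\ref{twoside} to pass between ``a pivot in every row'' and the solvability of $Ax = b$ for every $b$.

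For the forward direction, I would assume $A$ is invertible, so that there is a matrix $B$ with $AB = I$; Theorem~\ref{twoside} then supplies $BA = I$ as well. Exactly as in the opening line of the proof of Theorem~\ref{twoside}, the identity $BA = I$ forces the columns of $A$ to form a linearly independent set, since $Ax = 0$ gives $x = (BA)x = B(Ax) = 0$. Consequently the reduced row-echelon form of $A$ has a pivot in every one of its $n$ columns, and since no row carries more than one pivot, counting forces a pivot in every row as well. The concluding observation is that an $n$-by-$n$ matrix in reduced row-echelon form whose every row and column carries a pivot must be $I$: the pivots occupy strictly increasing columns down the rows, so the pivot in row $j$ lands in the $(j,j)$ entry, and each pivot column, being a leading column, is the corresponding standard basis vector. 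Hence $A$ is row-equivalent to $I$.

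For the converse, I would assume $A$ is row-equivalent to $I$, so that the reduced row-echelon form of $A$ is $I$ itself, which plainly has a pivot in every row. By the well-known theorem, the system $Ax = b$ is then consistent for every vector $b$; in particular, for each standard basis vector $e_j$ there is a solution $x_j$ of $Ax = e_j$. Assembling these solutions columnwise into $B := [\,x_1 \mid \cdots \mid x_n\,]$ yields $AB = [\,Ax_1 \mid \cdots \mid Ax_n\,] = [\,e_1 \mid \cdots \mid e_n\,] = I$, so $A$ is invertible (and Theorem~\ref{twoside} again certifies that $B$ is a genuine two-sided inverse).

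The step I expect to be the main obstacle is the structural claim in the forward direction that an $n$-by-$n$ reduced row-echelon matrix with a pivot in every row and column is exactly $I$. It is intuitively clear, but it is the single point where the \emph{definition} of the reduced row-echelon form, rather than reasoning about linear systems, has to be invoked with care.
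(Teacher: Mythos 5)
Your proof is correct, but it takes a route that differs from the paper's in a substantive way, chiefly in the forward direction. The paper runs a single chain of equivalences entirely through feasibility of linear systems: by Theorem~\ref{twoside}, $A$ is invertible iff $AX = I$ has a solution, iff $Ax = e_i$ is feasible for each $i$; a linearity lemma (summing solutions $s_i$ against the coordinates of $b$) upgrades this to feasibility of $Ax = b$ for every $b$; and the well-known theorem of \cite{llm2016} then converts that to a pivot in every row. Your converse is essentially this chain run in one direction, and you even dispense with the linearity lemma, since the well-known theorem hands you solvability for all $b$ at once and you only need the systems $Ax = e_j$. Your forward direction, however, is genuinely different: instead of feasibility you invoke the other half of Theorem~\ref{twoside}, namely $BA = I$, to get linear independence of the columns of $A$, hence a pivot in every column, hence by counting a pivot in every row --- an argument that parallels the opening of the paper's proof of Theorem~\ref{twoside} rather than anything in the paper's proof of this theorem. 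What the paper's route buys is that its intermediate statements ($Ax = b$ feasible for every $b$, columns spanning) are precisely additional items of the invertible matrix theorem listed in the closing remark, with the summation lemma as the pedagogical centerpiece. What your route buys is explicitness at the one point the paper glosses: the paper compresses ``the reduced row-echelon form of $A$ has a pivot in every row'' into ``$A$ is row-equivalent to $I$'' with an ``i.e.,'' whereas you spell out the structural fact that a square reduced row-echelon matrix with a pivot in every row and column must equal $I$ (pivots in strictly increasing columns, each pivot column a standard basis vector). That fact --- together with uniqueness of the reduced row-echelon form, which both you and the paper use tacitly in the converse --- is exactly what legitimizes the paper's ``i.e.,'' so your version fills in a step the paper leaves to the reader.
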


\begin{proof}
Following \hyperref[twoside]{Theorem \ref*{twoside}}, $A$ is invertible if and only if there is a  matrix $X$ such that $AX = I$. Thus, $A$ is invertible if and only if the linear system $Ax = e_i$ is feasible for all $i \in \{1, \dots, n\}$, where $e_i$ denotes the $i\textsuperscript{th}$ column of $I$. 

We claim that if $Ax = e_i$ is feasible for all $i \in \{1,\dots,n\}$, then $Ax = b$ is feasible for every vector $b$ (note that the converse of this statement is obviously true). To that end, if  $s_i$ is any solution to $Ax = e_i$ and $b= [ b_1~\cdots~b_n ]^\top$, then, following the linearity of $A$, 
\begin{align*} 
b= I b := \sum_{i=1}^n b_i e_i = \sum_{i=1}^n b_i ( As_i) = \sum_{i=1}^n A (b_i s_i) = A \left( \sum_{i=1}^n b_i s_i \right),
\end{align*}
and the claim is established.

Since $Ax = b$ for every vector $b$, another application of \cite[Theorem 4, \S 1.4]{llm2016} reveals that $A$ is invertible if and only if the reduced row-echelon form of $A$ possesses a pivot in every row, i.e., $A$ is invertible if and only if it is row-equivalent to $I$. 
\end{proof}

\begin{remark}
Our demonstration proves the equivalence of the following statements of the \textit{invertible matrix theorem}: 
\begin{itemize}
\item $A$ is invertible.
\item $A$ is row-equivalent to $I$.
\item $Ax = b$ is feasible for every vector $b$.
\item the reduced row-echelon form of $A$ has a pivot in every row.
\item the columns of $A$ are linearly independent.
\item the equation $Ax = 0$ has only the trivial solution.
\item the columns of $A$ span $\mathbb{R}^n$.
\item the columns of $A$ form a basis for $\mathbb{R}^n$.
\end{itemize}
\end{remark}


\begin{abstract}
An elementary proof of the two-sidedness of the matrix-inverse is given using only linear independence and the reduced row-echelon form of a matrix. In addition, it is shown that a matrix is invertible if and only if it is row-equivalent to the identity matrix without appealing to elementary matrices. This proof underscores the importance of a \textit{basis} and provides a proof of the \textit{invertible matrix theorem}.
\end{abstract}

\vfill\eject

\end{document}